 \newcommand{\fs}{\mathop{\mathrm{FS}}}
 \newcommand{\fu}{\mathop{\mathrm{FU}}}
 \newcommand{\fp}{\mathop{\mathrm{FP}}}
 \newcommand{\supp}{\mathop{\mathrm{supp}}}
 \newcommand{\cf}{\mathop{\mathrm{cf}}}
 \newcommand{\hind}[1]{\mathnormal{\mathsf{HIND}(#1)}}
 \newcommand{\zfc}{\mathnormal{\mathsf{ZFC}}}
 \newcommand{\ch}{\mathnormal{\mathsf{CH}}}
\newtheorem{theorem}{Theorem}
\newtheorem*{claim}{Claim}
\newtheorem{question}{Question}
\theoremstyle{definition}
\newtheorem{example}{Example}
\title{Hindman's Theorem is only a Countable Phenomenon}
\thanks{The author was partially supported by postdoctoral fellowship number 263820 from the Consejo Nacional de Ciencia y Tecnolog\'{\i}a (Conacyt), Mexico.}
\author[D. Fern\'andez]{David~J. Fern\'andez-Bret\'on}
\address{
              Department of Mathematics, University of Michigan \\
	      2074 East Hall, 530 Church Street \\
	      Ann Arbor, MI 48109-1043, U.S.A.}
\email{djfernan@umich.edu}
\urladdr{http://www-personal.umich.edu/\textasciitilde djfernan/}
\keywords{Ramsey-type theorem, Hindman's Theorem, Uncountable cardinals, $\Delta$-system Lemma, semigroups, abelian groups.}
\subjclass{03E02 \and 05A17 \and 05A18 \and 20E99}
\begin{document}

\begin{abstract}
We pursue the idea of generalizing Hindman's Theorem to uncountable 
cardinalities, by analogy with the way in which Ramsey's Theorem can be 
generalized to weakly compact cardinals. But unlike Ramsey's Theorem, 
the outcome of this paper is that the natural generalizations of Hindman's 
Theorem proposed here tend to fail at all uncountable cardinals.
\end{abstract}

\maketitle

\section{Introduction}
\label{intro}

Hindman's theorem is one of the most famous and interesting examples of a so-called 
\emph{Ramsey-type theorem}, a theorem about partitions.

\begin{theorem}[Hindman \cite{hindmanthm}]\label{eldehindman}
 For every partition $\mathbb N=A_0\cup A_1$ of the set of natural numbers into two cells, 
 there exists 
 an infinite $X\subseteq\mathbb N$ such that for some $i\in2$, $\fs(X)\subseteq A_i$ (where $\fs(X)$ 
 denotes the set
\begin{equation*}
\left\{\sum_{x\in F}x\bigg|F\subseteq X\mathrm{\ is\ finite\ and\ nonempty}\right\}
\end{equation*}
of all \emph{finite sums} of elements of $X$).
\end{theorem}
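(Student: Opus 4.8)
The plan is to follow the Galvin--Glazer approach, which recasts the problem in the algebra of the Stone--\v{C}ech compactification $\beta\mathbb{N}$. First I would identify $\beta\mathbb{N}$ with the space of ultrafilters on $\mathbb{N}$ and extend addition to it by declaring, for ultrafilters $p,q$, that $A\in p+q$ precisely when $\{n\in\mathbb{N}:A-n\in q\}\in p$, where $A-n=\{m:n+m\in A\}$. A routine check shows that $(\beta\mathbb{N},+)$ is an associative semigroup and that, for each fixed $q$, the map $p\mapsto p+q$ is continuous; thus $(\beta\mathbb{N},+)$ is a compact right-topological semigroup.

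The second step is to produce an idempotent. Here I would invoke the Ellis--Numakura lemma: every compact right-topological semigroup contains an element $p$ with $p+p=p$. Its proof is a Zorn's-lemma argument selecting a minimal closed subsemigroup and exploiting compactness together with right-continuity, and since I may take standard facts for granted I would simply cite it. Any such idempotent $p$ is automatically nonprincipal, so every member of $p$ is infinite.

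The heart of the argument, and the step I expect to be the main obstacle, is the combinatorial extraction lemma: if $p$ is idempotent and $A\in p$, then there is an infinite $X\subseteq\mathbb{N}$ with $\fs(X)\subseteq A$. Here I would set $A^{\star}=\{a\in A:A-a\in p\}$. Idempotency gives $A^{\star}\in p$, and the crucial refinement is that $a\in A^{\star}$ implies $A^{\star}-a\in p$; this is where $p+p=p$ is used twice, once to place $A^{\star}$ in $p$ and once to push the \emph{starred} property forward under translation. Granting this, I would recursively choose $x_0<x_1<\cdots$ together with sets $B_n\in p$ so that $x_n\in B_n\subseteq\bigcap\{A^{\star}-s:s\in\fs(\{x_0,\dots,x_{n-1}\})\cup\{0\}\}$; each such finite intersection lies in $p$ by the refinement, hence is infinite, so the choice of an $x_n$ larger than its predecessors can always be made. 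An induction on $|F|$ then shows that every finite sum $\sum_{i\in F}x_i$ reenters $A^{\star}\subseteq A$.

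Finally I would conclude: given the partition $\mathbb{N}=A_0\cup A_1$, the ultrafilter $p$ contains exactly one of the two cells, say $A_i$, and the extraction lemma applied to $A_i$ yields the desired infinite $X$ with $\fs(X)\subseteq A_i$. I anticipate that verifying the translation identity $A^{\star}-a\in p$ and managing the bookkeeping in the recursion will be the only genuinely delicate points; the remainder is formal semigroup theory. It is worth noting that this ultrafilter machinery is exactly what resists transfer to uncountable cardinals, which presumably motivates the negative results to follow.
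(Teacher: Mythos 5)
Your proposal is a correct and complete outline of the Galvin--Glazer proof, and all the key steps are right: the definition of $+$ on $\beta\mathbb N$ does make right translation $p\mapsto p+q$ continuous, Ellis's lemma applies, the set $A^{\star}=\{a\in A : A-a\in p\}$ lies in $p$ by idempotency, and the translation identity $A^{\star}-a\in p$ for $a\in A^{\star}$ follows by applying the same observation to $A-a$ and checking $(A-a)^{\star}\subseteq A^{\star}-a$; the recursion then goes through exactly as you describe. The paper, however, does not prove this theorem at all: it cites Hindman's original combinatorial argument and Baumgartner's short proof, and its working reformulation is the finite-unions version (Theorem~\ref{eldebaumgartner}), obtained by identifying integers with the supports of their binary expansions --- that is the form the paper actually generalizes and refutes at uncountable cardinals. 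Your ultrafilter route is the one the paper itself invokes later (Section~\ref{sec:2}, via \cite[Th. 5.8]{hindmanstrauss}) to get the Galvin--Glazer--Hindman theorem for arbitrary semigroups; it buys generality and brevity at the cost of heavy use of choice, whereas Baumgartner's proof is elementary and stays close to the finite-unions combinatorics. One small slip: it is not true that every idempotent of $(\beta\mathbb N,+)$ is automatically nonprincipal if $0\in\mathbb N$, since the principal ultrafilter at $0$ is idempotent and your extraction would then fail (the sets $B_n$ need not be infinite); the standard fix is to run the Ellis--Numakura argument inside the closed compact subsemigroup $\beta\mathbb N\setminus\mathbb N$ of nonprincipal ultrafilters, or simply to work over the positive integers.
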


Hindman's original proof of Theorem~\ref{eldehindman} is long and involved, but a much simpler 
proof, due to Baumgartner, can be found in~\cite{baumgartner}. Both proofs make extensive use of the fact that 
the statement of Theorem~\ref{eldehindman} is equivalent to the following statement.

\begin{theorem}\label{eldebaumgartner}
 For every partition $[\mathbb N]^{<\aleph_0}=A_0\cup A_1$ of the collection $[\mathbb N]^{<\aleph_0}$ 
 of finite subsets of $\mathbb N$ into two cells, there exists an infinite family 
 $X\subseteq[\mathbb N]^{<\aleph_0}$ of pairwise disjoint finite subsets of $\mathbb N$ and an 
 $i\in2$ such that $\fu(X)\subseteq A_i$, where $\fu(X)$ denotes the set 
 \begin{equation*}
 \left\{\bigcup_{Y\in F}Y\bigg|F\subseteq X\mathrm{\ is\ finite\ and\ nonempty}\right\}
 \end{equation*}
of \emph{finite unions} of elements from $X$.
\end{theorem}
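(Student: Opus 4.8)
The plan is to deduce Theorem~\ref{eldebaumgartner} directly from Hindman's Theorem~\ref{eldehindman}, exploiting the classical correspondence between finite subsets of $\mathbb N$ and natural numbers given by binary expansions. First I would introduce the bijection $\varphi\colon[\mathbb N]^{<\aleph_0}\to\mathbb N$ defined by $\varphi(F)=\sum_{n\in F}2^n$ (with $\varphi(\varnothing)=0$). The key algebraic feature of $\varphi$ is that addition in $\mathbb N$ mimics union of sets \emph{exactly when there is no carrying}, i.e.\ when the sets involved are pairwise disjoint: if $F_0,\ldots,F_{k-1}$ are pairwise disjoint then $\sum_{j<k}\varphi(F_j)=\varphi\bigl(\bigcup_{j<k}F_j\bigr)$, and moreover $\supp(\varphi(F))=F$, where $\supp(m)$ denotes the set of exponents occurring in the binary expansion of $m$ (so that $\varphi^{-1}(m)=\supp(m)$).

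Given a partition $[\mathbb N]^{<\aleph_0}=A_0\cup A_1$, I would transport it through $\varphi$ to a partition $\mathbb N=B_0\cup B_1$, setting $B_i=\{m\in\mathbb N\mid\varphi^{-1}(m)\in A_i\}$ (the colour assigned to $0$ being irrelevant). Applying Theorem~\ref{eldehindman} produces an infinite $Y\subseteq\mathbb N\setminus\{0\}$ and an $i\in2$ with $\fs(Y)\subseteq B_i$. If I could find an infinite family $X=\{F_k\mid k<\omega\}$ of pairwise disjoint sets whose finite unions all arise as $\varphi^{-1}$ of elements of $\fs(Y)$, then each $\bigcup_{k\in G}F_k=\supp\bigl(\sum_{k\in G}\varphi(F_k)\bigr)$ would lie in $A_i$, giving $\fu(X)\subseteq A_i$ as required.

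The main obstacle is precisely the extraction of this disjoint family. One cannot simply take $F_k=\supp(y_k)$ for an enumeration $Y=\{y_k\mid k<\omega\}$, since the binary supports of elements of $Y$ need not be disjoint---for instance $Y$ could consist entirely of numbers of the form $2^k+1$, all of whose supports contain $0$. Instead I would build the $F_k$ from \emph{block sums}. The crucial lemma is that, for every threshold $N$ and every infinite set of available indices, some finite block of the $y_n$'s sums to a multiple of $2^{N+1}$ and hence has binary support entirely above $N$. This follows from the pigeonhole principle applied to the residues modulo $2^{N+1}$: infinitely many $y_n$ share a common residue $r$, and adding together $2^{N+1}$ of them (all above any prescribed index) yields a sum $\equiv 2^{N+1}r\equiv0\pmod{2^{N+1}}$.

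With this lemma in hand I would carry out a recursion building pairwise disjoint finite blocks of indices $H_0<H_1<\cdots$ so that the block sums $w_k=\sum_{n\in H_k}y_n$ have pairwise disjoint supports arranged in increasing blocks: having fixed $w_0,\ldots,w_k$ with $\max\supp(w_k)=N_k$, I choose the next block among the remaining indices so that $w_{k+1}$ is divisible by $2^{N_k+1}$, forcing $\min\supp(w_{k+1})\geq N_k+1$. Because the $H_k$ are pairwise disjoint we retain $\fs(\{w_k\mid k<\omega\})\subseteq\fs(Y)\subseteq B_i$, and because the supports are pairwise disjoint the sets $F_k=\supp(w_k)$ are pairwise disjoint with $\bigcup_{k\in G}F_k=\supp\bigl(\sum_{k\in G}w_k\bigr)$ for every finite nonempty $G$. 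Setting $X=\{F_k\mid k<\omega\}$ then yields $\fu(X)\subseteq A_i$, completing the reduction. The reverse implication, that Theorem~\ref{eldebaumgartner} in turn yields Theorem~\ref{eldehindman}, is the easy direction: transporting a colouring of $\mathbb N$ back through $\varphi$ and applying the finite-unions statement to a disjoint family immediately produces a finite-sums set, since disjointness eliminates carrying.
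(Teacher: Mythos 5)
Your proposal is correct and is essentially the argument the paper has in mind: the paper derives Theorem~\ref{eldebaumgartner} from Theorem~\ref{eldehindman} by identifying a natural number with the support of its binary expansion and invoking the standard disjointification of an $\fs$-set (Hindman's Lemma~2.2, as pointed out by Baumgartner), which is exactly the block-sum/pigeonhole-on-residues refinement you carry out explicitly. Your write-up just supplies the details that the paper delegates to the references.
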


The equivalence between Theorems~\ref{eldehindman} and ~\ref{eldebaumgartner} follows 
from~\cite[Lemma 2.2]{hindmanthm} after identifying natural numbers with the support of 
their binary expansion (the (finite) set of places where the corresponding digit in the binary 
expansion of the given number is nonzero), and this fact is explicitly pointed out in~\cite[p. 384]{baumgartner}. 
Unlike Theorem~\ref{eldehindman}, whose statement relies on a very specific semigroup 
operation on the set $\mathbb N$, the statement of Theorem~\ref{eldebaumgartner} seems 
easily adaptable to higher cardinalities. So if $\kappa,\lambda$ are cardinal numbers 
with $\lambda\leq\kappa$, we will 
denote by $\hind{\kappa,\lambda}$ the statement asserting that for every partition 
$[\kappa]^{<\aleph_0}=A_0\cup A_1$ of the family $[\kappa]^{<\aleph_0}$ of finite 
subsets of $\kappa$ into two cells, there exists a family $X\subseteq[\kappa]^{<\aleph_0}$ 
of cardinality $\lambda$, consisting of pairwise disjoint (finite) subsets of $\kappa$, 
such that for some $i\in2$ we have that $\fu(X)\subseteq A_i$. With this notation, the 
``finite-unions" version of Hindman's theorem simply states that $\hind{\aleph_0,\aleph_0}$ 
(and hence also 
$\hind{\kappa,\aleph_0}$ for any infinite $\kappa$) holds. This notation intends to provide an 
analogy with the arrow notation used for generalizations of Ramsey's theorem, where  
$\kappa\longrightarrow(\lambda)_2^2$ 
denotes that for every partition $[\kappa]^2=A_0\cup A_1$ of the family of unordered pairs 
of $\kappa$ into two cells, there exists a subset $X\subseteq\kappa$ of cardinality $\lambda$ such 
that for some $i\in2$ we have that $[X]^2\subseteq A_i$. Thus Ramsey's theorem states 
that $\aleph_0\longrightarrow(\aleph_0)_2^2$, and for uncountable $\lambda$, the existence 
of a $\kappa$ such that $\kappa\longrightarrow(\lambda)_2^2$ follows from the Erd\H{o}s-Rado 
Theorem. However, the existence of an uncountable cardinal $\kappa$ such that 
$\kappa\longrightarrow(\kappa)_2^2$ is not provable in $\zfc$ (the usual axioms of 
mathematics), since such a $\kappa$ would be what is known as a weakly compact cardinal 
(which is a notion of large cardinal and hence its existence goes beyond the $\zfc$ 
axioms). Similarly, we 
might try to address the question of whether we can have $\hind{\kappa,\lambda}$ hold for 
some uncountable cardinals $\kappa,\lambda$ under certain conditions (with perhaps some large cardinal 
assumptions being the primary culprits).

Although this question seems (at least to the author) to be a very natural one, there seems 
to be very few earlier results along these lines. A theorem \cite[Th. 9]{milliken} of Milliken  
is easily seen to establish that $\hind{\kappa^+,\kappa^+}$ fails whenever $\kappa$ is an infinite 
cardinal such that $2^\kappa=\kappa^+$ (in particular, the Continuum Hypothesis $\ch$ implies that 
$\hind{\aleph_1,\aleph_1}$ fails). And the author was recently made aware \cite{michael-osvaldo} of 
an argument of Moore that shows that for every uncountable cardinal $\kappa$, the 
statement $\hind{\kappa,\kappa}$ implies that $\kappa\longrightarrow(\kappa)_2^2$ and therefore 
$\hind{\kappa,\kappa}$ fails unless $\kappa$ is a weakly compact cardinal (in particular, 
$\hind{\aleph_1,\aleph_1}$ fails even without assuming $\ch$). The main result from 
this paper is that the statement $\hind{\kappa,\lambda}$ 
fails whenever $\aleph_0<\lambda\leq\kappa$, thus banishing every possible attempt of generalizing 
Hindman's theorem, at least in its finite-unions version, to uncountable cardinalities. We also 
state other possible generalizations of Hindman's theorem to the uncountable realm, and show 
that they all fail as well.

It is 
worth noting that the generalizations considered in this paper are of a quantitative nature,
meaning that they deal
only with cardinality, hence the results obtained do not preclude the possibility of obtaining 
other sorts of generalizations of Hindman's Theorem. For example, Tsaban~\cite{tsaban} has 
shown that Hindman's Theorem
may be viewed as a colouring theorem dealing with open covers of a certain
countable topological
space, and he proved a generalization of this theorem to arbitrary Menger
spaces (which can have any arbitrary cardinality, although the objects 
coloured in this result are countable families of open sets). As another example, 
Zheng~\cite{y2zheng} has built on Todor\v{c}evi\'c's theory of Ramsey 
Spaces~\cite{ramseysp} to obtain results where finite subsets of $\omega$ together 
with real numbers are coloured, and monochromatic combinations of $\fu$-sets and 
perfect subsets of $\mathbb R$ 
are obtained. Thus, it is possible 
to improve Hindman's theorem in terms of the richness of structure of the obtained 
monochromatic structure, but not in terms of its size.

The second section of this paper contains the proof that $\hind{\kappa,\lambda}$ 
fails for every uncountable $\lambda\leq\kappa$. In the 
third and fourth sections, we consider other possible generalizations of Hindman's 
Theorem to groups and semigroups, and we explain that most of these fail as 
well.\footnote{Further results along these lines are 
obtained in a recent follow-up joint paper of the author and Rinot~\cite{assaf-david}.}

\section{The Uncountable Hindman Statement Fails}
\label{sec:1}

The following argument (which, quite surprisingly, is reasonably simple) establishes the main 
result of this paper.

\begin{theorem}\label{nouncountablehindman}
 Let $\kappa$ be any infinite cardinal. Then there is a partition of $[\kappa]^{<\omega}$ into 
 two cells, none of which can contain $\fu(X)$ for any uncountable pairwise disjoint 
 family $X\subseteq[\kappa]^{<\omega}$. Thus, for any uncountable $\lambda$ and any $\kappa\geq\lambda$, 
 the statement $\hind{\kappa,\lambda}$ fails.
\end{theorem}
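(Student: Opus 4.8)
The plan is to exhibit a single explicit two-colouring of $[\kappa]^{<\omega}$ that depends only on the \emph{cardinality} of a finite set, and then to defeat every candidate monochromatic $\fu$-set using the pigeonhole principle alone; I expect this route to avoid the $\Delta$-system lemma entirely, which is what should make the argument surprisingly simple. The naive first attempt is to colour $s$ by the parity of $|s|$. Since the members of a pairwise disjoint family $X$ are pairwise disjoint, the cardinality of a finite union $\bigcup_{Y\in F}Y$ equals $\sum_{Y\in F}|Y|$, so this colouring does separate a single block from the union of two blocks, \emph{provided} the blocks have odd size. The obstacle---and essentially the only obstacle---is that this parity colouring is defeated by a family all of whose members have even size (for instance an uncountable pairwise disjoint family of pairs), on which every finite union again has even cardinality.

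The fix is to replace ``parity of the cardinality'' by a finer invariant. Writing $\rho(m)$ for the exponent of the largest power of $2$ dividing the positive integer $m$, I will colour a nonempty $s\in[\kappa]^{<\omega}$ by $\rho(|s|)\bmod 2$ (and place $\emptyset$ in either cell). The point of this choice is the identity
\begin{equation*}
\rho(2n)=\rho(n)+1,
\end{equation*}
valid for every $n\geq 1$, which guarantees that $n$ and $2n$ always receive \emph{opposite} colours, no matter what $n$ is.

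With the colouring fixed, the verification is short. Given an uncountable pairwise disjoint family $X\subseteq[\kappa]^{<\omega}$, I first apply pigeonhole: there are only countably many possible cardinalities but uncountably many nonempty members, so there is a fixed $n\geq 1$ and an uncountable subfamily $X'\subseteq X$ all of whose members have cardinality exactly $n$. Choosing two distinct $x,y\in X'$, disjointness gives $|x\cup y|=2n$, and both $x$ and $x\cup y$ belong to $\fu(X')\subseteq\fu(X)$. By the displayed identity these two elements of $\fu(X)$ receive opposite colours, so $\fu(X)$ meets both cells of the partition and is therefore contained in neither. This establishes that the partition has the required property, and hence that $\hind{\kappa,\lambda}$ fails for every uncountable $\lambda\leq\kappa$: any family witnessing $\hind{\kappa,\lambda}$ would in particular be an uncountable pairwise disjoint family with a monochromatic $\fu$-set. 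The conceptual work is thus entirely front-loaded into selecting the invariant $\rho(\,\cdot\,)\bmod 2$; once it is in hand, no combinatorics beyond counting is needed.
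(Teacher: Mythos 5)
Your proof is correct and follows essentially the same strategy as the paper's: an explicit two-colouring of $[\kappa]^{<\omega}$ determined by a parity invariant of $|x|$, defeated by the observation that the disjoint union of two pieces of the same ``level'' flips the colour. The only difference is the choice of invariant --- you use the $2$-adic valuation $\rho(|x|)\bmod 2$ where the paper uses $\lfloor\log_2|x|\rfloor\bmod 2$ (which lets the paper conclude directly that $\lfloor\log_2|\cdot|\rfloor$ is injective on $X$, without your preliminary pigeonhole step to equalize cardinalities) --- but both colourings work for the same reason.
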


\begin{proof}
 Partition $[\kappa]^{<\omega}$ as $A_1\cup A_0$, where
 \begin{equation*}
 A_i=\{x\in[\kappa]^{<\omega}\big|\lfloor\log_2|x|\rfloor\equiv i\mod 2\},
 \end{equation*}
 this is, $x\in A_i$ if and only if the unique $k\in\omega$ such that $2^k\leq|x|<2^{k+1}$ 
 has the same parity as $i$.
 Note that, if $x$ and $y$ are disjoint, both in the same $A_i$, and 
 $\lfloor\log_2|x|\rfloor=\lfloor\log_2|y|\rfloor=k$, then (since $|x\cup y|=|x|+|y|$) 
 we have that $\lfloor\log_2|x\cup y|\rfloor=k+1$ and hence $x\cup y\in A_{1-i}$. Thus if 
 $X\subseteq[\kappa]^{<\omega}$ is a 
 pairwise disjoint family such that $\fu(X)\subseteq A_i$, we must have that 
 $\lfloor\log_2|\cdot|\rfloor:X\longrightarrow\omega$ is an injective function, 
 and hence $X$ must be countable.
\end{proof}

An easy consequence of Theorem~\ref{nouncountablehindman} is that many other 
Ramsey-theoretic results will not have an uncountable analog either. Let us consider 
two of these. For instance, Gower's $c_0$ theorem and Carlson's theorem 
on sequences of variable words\footnote{This result of Carlson~\cite{carlson}, also 
discovered independently by Furstenberg and Katznelson~\cite{furstenberg-katznelson}, is sometimes 
(e.g. in~\cite[Theorem 2.35]{ramseysp}) referred to as \textit{the infinite 
Hales-Jewett theorem}.}  are extensions of results that have Hindman's 
theorem as a straightforward consequence. Thus, any reasonable statement 
constituting an analog 
of these results to the uncountable realm should have $\hind{\kappa,\lambda}$ 
as an easy consequence, for some uncountable $\lambda$ and a $\kappa\geq\lambda$; and 
will therefore fail because of Theorem~\ref{nouncountablehindman}.

As a final observation, I would like to anticipate that in a forthcoming joint paper 
between Chodounsk\'y, Krautzberger and the author, the partition from the 
proof of Theorem~\ref{nouncountablehindman} is used (with $\kappa=\omega$) to 
show that the core of a union ultrafilter is not a rapid filter (where an ultrafilter 
$u$ on $[\mathbb N]^{<\aleph_0}$ is said to be a union ultrafilter if it has a base of 
sets of the form $\fu(X)$ for an infinite pairwise disjoint $X\subseteq[\mathbb N]^{<\aleph_0}$, 
and the core of $u$ is $\{\bigcup A\big|A\in u\}$, which is a filter on $\mathbb N$).

\section{Generalizations in terms of Abelian Groups}
\label{sec:2}

If one proves Hindman's theorem~\ref{eldehindman} via the argument of Galvin and Glazer (see e.g. 
\cite[Th. 5.8]{hindmanstrauss}), one can see right away that the scope of this theorem goes way beyond 
the realm of the natural numbers $\mathbb N$. This is, that same argument yields the following much more 
general statement, which is therefore commonly known as the Galvin-Glazer-Hindman Theorem.

\begin{theorem}
 Let $S$ be any semigroup, and suppose that we partition $S=A_0\cup A_1$ into two cells. Then there 
 exists a sequence $\vec{x}=\langle x_n\big|n<\omega\rangle$ (which in most cases of interest can be 
 found to be injective) and an $i\in2$ such that 
 $\fp(\vec{x})\subseteq A_i$, where $\fp(\vec{x})$ denotes the set
\begin{equation*}
 \left\{x_{k_0}*x_{k_1}*\cdots *x_{k_l}\big|l<\omega\text{ and }k_0<k_1<\cdots<k­­_l<\omega\right\}
\end{equation*}
of all \emph{finite products} of elements from the sequence $\vec{x}$.
\end{theorem}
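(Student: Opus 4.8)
The plan is to carry out the classical algebraic argument of Galvin and Glazer inside the Stone--\v{C}ech compactification $\beta S$ of $S$ regarded as a discrete semigroup. First I would identify $\beta S$ with the space of all ultrafilters on $S$, equipped with the topology whose basic clopen sets are $\overline{A}=\{p\in\beta S\mid A\in p\}$ for $A\subseteq S$; this makes $\beta S$ a compact Hausdorff space. I would then lift the operation from $S$ to $\beta S$ by declaring $A\in p*q$ if and only if $\{s\in S\mid s^{-1}A\in q\}\in p$, where $s^{-1}A=\{t\in S\mid s*t\in A\}$. A routine (if slightly tedious) verification shows that this operation is associative and \emph{right-topological}, meaning that for each fixed $q\in\beta S$ the map $p\mapsto p*q$ is continuous.

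With $\beta S$ established as a compact Hausdorff right-topological semigroup, the next step is to invoke the Ellis--Numakura lemma: every nonempty compact Hausdorff right-topological semigroup contains an idempotent. This produces an ultrafilter $p$ with $p*p=p$. Since $S=A_0\cup A_1$ and $p$ is an ultrafilter, one of the two cells belongs to $p$; fix such a cell and call it $A$, with $A\subseteq A_i$ for the corresponding $i\in2$.

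The engine of the construction is a lemma about idempotents: whenever $p*p=p$ and $A\in p$, the set $A^\star=\{s\in A\mid s^{-1}A\in p\}$ again belongs to $p$, and moreover $s^{-1}(A^\star)\in p$ for every $s\in A^\star$. The first assertion is immediate from unravelling $A\in p*p$, whereas the second is the point at which associativity of $*$ is genuinely needed. Granting this lemma, I would recursively build the sequence $\vec{x}=\langle x_n\mid n<\omega\rangle$ together with a descending chain $A^\star=B_0\supseteq B_1\supseteq\cdots$ of members of $p$, all contained in $A^\star$. At stage $n$ I would choose $x_n\in B_n$ (legitimate, as $B_n\in p$ is nonempty) and then let $B_{n+1}$ be the intersection of $B_n$ with the finitely many sets $x_n^{-1}(A^\star)$ and $(y*x_n)^{-1}(A^\star)$, where $y$ ranges over the finite products of $x_0,\dots,x_{n-1}$ formed so far; by the lemma each of these translates lies in $p$, so $B_{n+1}\in p$ and the recursion proceeds.

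The invariant sustaining the construction is that every finite product $y=x_{k_0}*\cdots*x_{k_l}$ with $k_0<\cdots<k_l<n$ satisfies both $y\in A^\star$ and $B_n\subseteq y^{-1}(A^\star)$. The main obstacle, and really the only step requiring care, is checking that this invariant is preserved --- in particular that appending $x_n$ to a previously constructed product $y$ keeps $y*x_n$ inside $A^\star$. This follows from the inclusion $x_n\in B_n\subseteq y^{-1}(A^\star)$ guaranteed by the previous stage, while the memberships $x_n^{-1}(A^\star),(y*x_n)^{-1}(A^\star)\in p$ that feed the next stage come from the second clause of the lemma. Once the invariant is in place, every element of $\fp(\vec{x})$ lies in $A^\star\subseteq A\subseteq A_i$, which is exactly what we want; and in the cases of interest an injective $\vec{x}$ can be arranged by thinning to a suitable subsequence.
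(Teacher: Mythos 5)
Your proof is correct and is precisely the Galvin--Glazer idempotent-ultrafilter argument that the paper itself invokes for this theorem (by citing Theorem 5.8 of Hindman and Strauss) rather than reproducing. All the essential steps --- the right-topological structure of $\beta S$, the Ellis--Numakura lemma, the $A^\star$ lemma for idempotents, and the recursive construction with its invariant --- are in order.
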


Thus, given a semigroup $S$ and an ordinal $\alpha$, we introduce the symbol $\hind{S,\alpha}$ to denote
the statement that whenever we partition $S=A_0\cup A_1$ into two cells, there exists an $\alpha$-sequence 
$\vec{x}=\langle x_\xi\big|\xi<\alpha\rangle$ and an $i\in 2$ such that $\fp(\vec{x})\subseteq A_i$ (where 
$\fp(\vec{x})$ consists of all finite products $x_{\xi_0}*x_{\xi_1}*\cdots *x_{\xi_l}$ such that $l<\omega$ 
and $\xi_0<\xi_1<\cdots<\xi_l<\alpha$). If our semigroup $S$ is commutative, chances are that we will be using 
additive notation and so we will use the symbol $\fs(\vec{x})$ rather than $\fp(\vec{x})$ (finite sums 
instead of finite products). Also, in this case the order in which the sums are taken is not important and 
so we will only consider the statements $\hind{S,\lambda}$ where $\lambda$ is a cardinal (since in this 
case $\hind{S,\alpha}$ holds if and only if $\hind{S,|\alpha|}$ holds). The main result of this section 
is that for every abelian group (in fact, for every commutative cancellative semigroup) $G$, and for every 
uncountable cardinal $\lambda$, the statement 
$\hind{G,\lambda}$ fails\footnote{In the notation of our recent follow-up 
paper~\cite{assaf-david}, 
the failure of this statement is denoted by $G\nrightarrow[\lambda]_2^{\fs}$.}.

\begin{theorem}\label{abeliangroup}
 Let $G$ be a commutative cancellative semigroup. Then there exists a partition $G=A_0\cup A_1$ of $G$ into 
 two cells such that for no uncountable $X\subseteq G$ and no $i\in 2$ do we have that $\fs(X)\subseteq A_i$. 
 This is, the statement $\hind{G,\aleph_1}$ (and hence also $\hind{G,\lambda}$ for every uncountable $\lambda$) 
 fails.
\end{theorem}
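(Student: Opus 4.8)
The plan is to reduce the problem, through a short chain of structure-preserving embeddings, to the case of a completely transparent group and then to run a parity-of-$\log_2$-of-support argument modelled on the proof of Theorem~\ref{nouncountablehindman}. Since $G$ is commutative and cancellative it embeds, as a semigroup, into its group of differences, an abelian group $H$; and every abelian group embeds into its divisible hull $D$, which by the structure theorem for divisible abelian groups is a direct sum $D=\mathbb{Q}^{(I)}\oplus\bigoplus_{p}(\mathbb{Z}(p^{\infty}))^{(J_p)}$. Because $\fs$ is computed the same way in any subsemigroup, a partition of $D$ admitting no uncountable monochromatic $\fs$-set restricts to a partition of $G$ with the same property, and if $G$ is uncountable then so is $D$. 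Writing $D=\bigoplus_{k\in K}C_k$ with each $C_k$ a copy of $\mathbb{Q}$ or of some $\mathbb{Z}(p^{\infty})$, the index set $K$ is then uncountable while every $C_k$ is countable. (If $G$ is countable there is nothing to prove, so I assume $|G|\geq\aleph_1$.) It therefore suffices to partition $D$.

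For $d\in D$ I set $\supp(d)=\{k\in K\mid d_k\neq 0\}\in[K]^{<\omega}$ and partition $D=A_0\cup A_1$ by declaring $d\in A_i$ exactly when $\lfloor\log_2|\supp(d)|\rfloor\equiv i\pmod 2$ (sending $0$ to $A_0$, say). The feature I would extract from the countability of the summands is that for each finite $R\subseteq K$ the set $\{d\in D\mid\supp(d)\subseteq R\}=\bigoplus_{k\in R}C_k$ is countable.

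Now suppose toward a contradiction that $X\subseteq D$ is uncountable and $\fs(X)\subseteq A_i$ for some $i$. By the previous observation uncountably many distinct finite sets occur among $\{\supp(x)\mid x\in X\}$, so after thinning I may assume $x\mapsto\supp(x)$ is injective on $X$. Applying the $\Delta$-system lemma to this uncountable family of finite sets yields an uncountable $X'\subseteq X$ whose supports form a $\Delta$-system with root $R$; writing $\supp(x)=R\sqcup S_x$, the petals $S_x$ are pairwise disjoint and, after discarding the at most one element whose support equals $R$, nonempty. The key computation is that disjointness of the petals survives summation: if $x_1,\dots,x_m\in X'$ are distinct and $k\in S_{x_{j_0}}$, then $k\notin\supp(x_j)$ for every $j\neq j_0$, so $(x_1+\cdots+x_m)_k=(x_{j_0})_k\neq 0$; hence $\bigsqcup_{j\le m}S_{x_j}\subseteq\supp(x_1+\cdots+x_m)$ and consequently $|\supp(x_1+\cdots+x_m)|\geq m$. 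Fixing a countable sequence of distinct elements of $X'$ and letting $\sigma_m$ be the sum of its first $m$ terms, the numbers $|\supp(\sigma_m)|$ are unbounded; an unbounded set of natural numbers meets the intervals $[2^k,2^{k+1})$ for infinitely many $k$ and so realizes both parities of $\lfloor\log_2(\cdot)\rfloor$. Thus two of the $\sigma_m$ fall into different cells, contradicting $\{\sigma_m\mid m\geq 1\}\subseteq\fs(X')\subseteq A_i$.

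The step I expect to be the main obstacle is manufacturing a usable finite-support structure out of an arbitrary commutative cancellative semigroup, which is precisely what the passage to the divisible hull accomplishes (an arbitrary abelian group need not be a direct sum of countable groups). A second point that looks threatening but turns out to be harmless is the root $R$: on its coordinates the summands' coefficients add, so $\supp(\sigma_m)$ is not simply the disjoint union of the petals. One is tempted to cancel the root by passing to differences $x-y$ — which works in $\bigoplus\mathbb{F}_2$, where $\fs$ happens to be closed under differences — but in general differences need not lie in $\fs(X')$. The observation that rescues the argument is that the root can only enlarge the support, so the one-sided bound $|\supp(\sigma_m)|\geq m$ is all that is required. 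Finally, to read off that $\hind{G,\aleph_1}$ (and hence $\hind{G,\lambda}$ for every uncountable $\lambda$) fails, one notes that a monochromatic $\fs$-set arising from an $\aleph_1$-sequence of uncountable range is handled by the argument above, the degenerate case of countable range being assumed away by taking the sequence injective, in keeping with the convention governing the symbol $\hind{G,\lambda}$.
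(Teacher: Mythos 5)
Your reduction to a direct sum of countable groups (group of differences, then divisible hull) is essentially the paper's own first step --- the paper embeds $G$ into $\bigoplus_{\alpha<\kappa}\mathbb T$ with countable projections, which is the same structure --- and your partition, the thinning to distinct supports, and the $\Delta$-system step all match. Your observation that the petals survive summation, giving $|\supp(x_1+\cdots+x_m)|\geq m$, is also correct. But the proof breaks at the very last inference: from ``the numbers $|\supp(\sigma_m)|$ are unbounded'' you conclude that they ``realize both parities of $\lfloor\log_2(\cdot)\rfloor$,'' and this is false. An unbounded set of natural numbers meets infinitely many dyadic intervals $[2^k,2^{k+1})$, but nothing forces those $k$ to include both parities: the set $\{4^j\mid j\in\omega\}$ is unbounded, yet $\lfloor\log_2 4^j\rfloor=2j$ is always even. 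Since you have only the one-sided bound $|\supp(\sigma_m)|\geq m$ and no upper control (the petal sizes $|S_{x_j}|$ can grow arbitrarily fast), the support sizes of your partial sums could in principle skip every interval of the wrong parity, and the contradiction never materializes. This missing step is where the actual work of the theorem lies.

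The paper closes exactly this gap with two moves you omit. First, it pigeonholes to an infinite subfamily whose members all have the same support size $l$ (arguing that $x\mapsto|\supp(x)|$ cannot be finite-to-one on an uncountable set). Second --- this is the paper's Claim --- it refines the $\Delta$-system to a sum subsystem $Z$ on which the support of a sum is exactly the union of the supports, ruling out cancellation on the root; your one-sided bound was designed to sidestep this issue, but a two-sided control is now needed. With $|\supp(z_k)|=l$ for all $k$, root size $|r|$, and exact additivity of supports, summing precisely $2^{m-n}+1$ elements (where $2^m\leq l<2^{m+1}$ and $2^n\leq l-|r|<2^{n+1}$) yields a support of size $2^{m-n}(l-|r|)+l\in[2^{m+1},2^{m+2})$, which lands in the opposite cell. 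You need some version of this quantitative step --- equal support sizes plus a computed number of summands pushing the support size into the next dyadic block --- to finish; unboundedness alone is not enough.
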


\begin{proof}
 Since $G$ is commutative and cancellative, it is possible to embed $G$ into $\bigoplus_{\alpha<\kappa}\mathbb T$ 
 for some cardinal $\kappa$, where $\mathbb T=\mathbb R/\mathbb Z$ is the unit circle group. Moreover, it is 
 possible to do this embedding in such a way that the $\alpha$-th projection $\pi_\alpha[G]$ is either (isomorphic 
 to) $\mathbb Q$ 
 or a quasicyclic group, so in either case the projection is a countable set (this is all 
explained with detail in 
 \cite[p. 123]{yonilaboriel}). Thus throughout this proof, every element $x\in G$ will be thought of as a 
 member of $\bigoplus_{\alpha<\kappa}\mathbb T$, with $\alpha$-th projections denoted by 
 $\pi_\alpha(x)\in\mathbb T=\mathbb R/\mathbb Q$ and 
 finite support $\supp(x)=\{\alpha<\kappa\big|\pi_\alpha(x)\neq 0\}$.
 
 In a way totally similar to what we did in the proof of Theorem~\ref{nouncountablehindman}, 
 for $i\in 2$ we define 
 \begin{equation*}
  A_i=\left\{x\in G\big|\lfloor\log_2|\supp(x)|\rfloor\equiv i\mod 2\right\}
 \end{equation*}
 and claim that $G=A_0\cup A_1$ is the 
 partition that makes the theorem work. So by way of contradiction, we start by assuming that 
 $X\subseteq G$ is an uncountable subset such that $\fs(X)\subseteq A_i$ for some $i\in 2$. We first 
 notice that for any given finite $F\subseteq\kappa$, there can only be countably many elements 
 $x\in G$ such that $\supp(x)=F$ (because we assumed that each $\pi_\alpha[G]$ is countable) and 
 so by thinning out $X$ we can assume that the supports of elements of $X$ are pairwise distinct. 
 Furthermore, by the $\Delta$-system lemma (see e.g. \cite[Th. 1.5]{kunen} or \cite[Th. 16.1]{justweese}), 
 there 
 exists an uncountable $Y\subseteq X$ such that the supports of elements from $Y$ form a 
 $\Delta$-system, this is, there is a fixed finite $R\subseteq\kappa$ (called the \textbf{root} 
 of the $\Delta$-system) such that for every 
 two distinct $x,y\in Y$, we have that $\supp(x)\cap\supp(y)=R$.
 
 Ideally, we would like to have that $\supp\left(\sum_{k<l}x_k\right)=\bigcup\limits_{k<l}\supp(x_k)$ 
 whenever $x_0,x_1,\ldots,x_l\in Y$, but in order to ensure that we still need to process $Y$ a bit 
 more.
 
 \begin{claim}\label{refinardeltasistema}
  There exists an uncountable $Z$ with $\fs(Z)\subseteq A_i$ such that the supports of its elements 
  form a $\Delta$-system and moreover, whenever $x_0,x_1,\ldots,x_l\in Z$ we have that 
  $\supp\left(\sum_{k<l}x_k\right)=\bigcup\limits_{k<l}\supp(x_k)$.
 \end{claim}
 
 \begin{proof}[Proof of Claim]
  Let $n=|R|$ with $R=\{\alpha_1,\ldots,\alpha_n\}$. We will recursively construct 
  a sequence of uncountable sets $Y_0,Y_1,\ldots,Y_{n+1}$ such that $Y=Y_0$ and each 
  $Y_{k+1}$ is 
  a \emph{sum subsystem} of $Y_k$ (this is, for each $x\in Y_{k+1}$ there is a 
  finite $F_x\subseteq Y_k$ such that $x=\sum_{y\in F_x}y$ and moreover whenever 
  $x,y\in Y_{k+1}$ we have that $F_x\cap F_y=\varnothing$), and satisfying that either 
  $\alpha_k\in\supp(x)$ for all $x\in\fs(Y_k)$, or $\alpha_k\notin\supp(x)$ for all 
  $x\in\fs(Y_k)$. 
  In the end we will let $Z=Y_{n+1}$ and 
  $r=\{\alpha_k\big|\alpha_k\in\supp(x)\text{ for all }x\in\fs(Y)\}\subseteq R$. This will 
  ensure that the supports of elements from $Z$ form a $\Delta$-system with root $r$, and 
  moreover we will have that $Z$ is a sum subsystem of $Y$, hence 
  $\fs(Z)\subseteq\fs(Y)\subseteq A_i$. Finally, the fact that $r\subseteq\supp(x)$ for every 
  $x\in\fs(Z)$ will imply that $\supp\left(\sum_{k<l}x_k\right)=\bigcup\limits_{k<l}\supp(x_k)$ 
  whenever $x_0,x_1,\ldots,x_l\in Z$.
  
  Now for the construction, suppose that we have already constructed $Y_k$ satisfying the 
  imposed requirements. To simplify notation let $\alpha=\alpha_{k+1}$. Since we assumed that 
  $\pi_\alpha[G]$ is countable, by the pigeonhole principle there is an uncountable 
  $Y'\subseteq Y_k$ such that all of the $\pi_\alpha(y)$, for $y\in Y'$, equal some fixed 
  $t\in\mathbb T$. If this $t$ is of infinite order, then we simply make $Y_{k+1}=Y'$ and notice 
  that $\alpha\in\supp(x)$ for every $x\in\fs(Y_{k+1})$. If, on the other hand, 
  $t$ is of finite order (say, of order $n$) then we partition $Y'=\bigcup\limits_{\xi<|Y'|}F_\xi$ 
  into uncountably many cells $F_\xi$ of cardinality $n$, and let $Y_{k+1}$ consist of the elements 
  $\sum_{x\in F_\xi}x$ for $\xi<|Y'|$. Then we will have that $\alpha\notin\supp(y)$ for 
  every $y\in Y_{k+1}$ and subsequently, $\alpha\notin\supp(x)$ for every $x\in\fs(Y_{k+1})$. This 
  finishes the construction.
 \end{proof}
 
 We now use the $Z$ given by the claim in order to reach a contradiction. We will argue that 
 the function $|\supp(\cdot)|\upharpoonright Z:Z\longrightarrow\omega$ is 
 finite-to-one, which will imply that $Z$ must be countable, contrary to its construction. 
 So assume that there is an infinite family $\{z_k\big|k<\omega\}\subseteq Z$ such that 
 all of the $|\supp(z_k)|$ are equal to some $l$, and let $m<\omega$ be such that 
 $2^m\leq l<2^{m+1}$ (then by assumption, $i\equiv m\mod 2$). Also, let $n$ be such that 
 $2^n\leq l-|r|<2^{n+1}$ (note that $n\leq m$). 
 We then let $z=\sum_{k\leq 2^{m-n}}z_k\in A_i$ and notice that, since by the claim we have 
 that $\supp(z)=\bigcup\limits_{k\leq 2^{m-n}}\supp(z_k)$ and the $\supp(z_k)$ form a $\Delta$-system with root 
 $r$, we can conclude that 
 \begin{align*}
  |\supp(z)|&=\left(\sum_{k\leq 2^{m-n}}|\supp(x_k)|\right)-2^{m-n}|r|=(2^{m-n}+1)l-2^{m-n}|r| \\
  &=2^{m-n}(l-|r|)+l,
 \end{align*}
 and since $2^m\leq 2^{m-n}(l-|r|)<2^{m+1}$, we conclude that $2^{m+1}\leq|\supp(x)|<2^{m+2}$, meaning 
 that $x\in A_{1-i}$, contrary to the assumption.
\end{proof}

It might be argued that the statement $\hind{\kappa,\lambda}$ as in the previous section is the ``wrong'' way of generalizing the finite-union version of Hindman's theorem, and that one should consider partitions of $[\kappa]^{<\kappa}$ 
instead of $[\kappa]^{<\omega}$. However, if we equip $[\kappa]^{<\kappa}$ with the symmetric difference 
$\bigtriangleup$ as a 
group operation, we obtain an abelian group (in fact, a Boolean group) with the peculiarity that taking 
a finite union of pairwise disjoint elements of $[\kappa]^{<\kappa}$ coincides with taking its finite 
sum according to this group operation. Hence Theorem~\ref{abeliangroup} implies that this purported 
finite-union generalization of Hindman's theorem also fails at all uncountable 
cardinals.

\section{The noncommutative case}
\label{sec:3}

The next natural question is whether the previous results can be generalized to non-commutative semigroups. 
This is, is it true that $\hind{S,\lambda}$ fails whenever $\lambda$ is uncountable and $S$ is any semigroup? 
As test cases for this question, the first two that come to mind are the free semigroup and the free group. 
If we let $S_\kappa$ be the free semigroup on $\kappa$ generators, and consider the partition 
$S_\kappa=A_0\cup A_1$ with 
\begin{equation*}
 A_i=\{x\in S_\kappa\big|\lfloor\log_2\ell(x)\rfloor\equiv i\mod 2\}
\end{equation*}
(where $\ell(x)$ denotes the length of $x$), it is easy to see (arguing as in the proof of 
Theorem~\ref{nouncountablehindman}) that $\hind{S_\kappa,\lambda}$ fails for 
every uncountable $\lambda$. In the case of the free group, a slightly more complicated argument is 
needed.

\begin{theorem}\label{freegroup}
 Let $\kappa$ be a cardinal and let $F_\kappa$ be the free group on $\kappa$ generators. Then for 
 every uncountable ordinal $\lambda$, the statement $\hind{F_\kappa,\lambda}$ fails.
\end{theorem}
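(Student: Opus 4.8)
The plan is to use exactly the length-based colouring already employed in the free-semigroup case, namely $A_i=\{x\in F_\kappa\mid\lfloor\log_2\ell(x)\rfloor\equiv i\bmod 2\}$, and to show that this partition already works. Assuming toward a contradiction that $\vec{x}=\langle x_\xi\mid\xi<\lambda\rangle$ is an (injective) $\lambda$-sequence with $\fp(\vec{x})\subseteq A_i$, the whole difficulty is that in a group length is no longer additive: when two reduced words are multiplied, their lengths combine only after a possibly large cancellation at the junction. Controlling this cancellation is the main obstacle, and everything else will be bookkeeping around it.

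The key idea to tame the cancellation is a $\Delta$-system argument on supports. First I would note that, for each fixed finite $F\subseteq\kappa$, only countably many elements of $F_\kappa$ have support contained in $F$ (they form a copy of the free group on $|F|$ generators), so uncountably many distinct supports must occur among the $x_\xi$; after thinning to one representative per support and applying the $\Delta$-system lemma I may assume that the supports $\supp(x_\xi)$ form a sunflower with some finite root $R$. The payoff is that any cancellation between two distinct $x_\xi,x_\eta$ can only involve letters whose generators lie in $\supp(x_\xi)\cap\supp(x_\eta)=R$, since a cancelling letter must occur in both words; hence all cancellation is confined to the \emph{countable} free subgroup $F_R$ generated by the finitely many root generators.

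Next I would normalise the words. Writing $x_\xi=u_\xi\,m_\xi\,v_\xi$, where $u_\xi$ (respectively $v_\xi$) is the maximal prefix (suffix) of $x_\xi$ consisting of root letters, so that $m_\xi$ begins and ends with non-root letters, the factors $u_\xi,v_\xi$ range over the countable group $F_R$. By pigeonhole I may thin the sequence so that $u_\xi=u$ and $v_\xi=v$ are constant, discard the countably many words lying entirely in $F_R$, and thin once more so that $|m_\xi|=\mu$ is constant. Because distinct $m_\xi$ use disjoint non-root generators and meet $F_R$ only through $u$ and $v$, a direct check of the junctions shows that for any increasing indices the product collapses to the reduced word
\begin{equation*}
 x_{\xi_0}x_{\xi_1}\cdots x_{\xi_l}=u\,m_{\xi_0}\,w\,m_{\xi_1}\,w\cdots w\,m_{\xi_l}\,v,
\end{equation*}
where $w$ is the reduced form of $vu$ and no further cancellation occurs (each junction pits a non-root letter against a root letter, or two disjoint non-root letters against each other).

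Finally I would read off the lengths: a product of $t$ of these elements has length $\ell(t)=(|u|+|v|-|w|)+t(\mu+|w|)$, which is affine in $t$ with common difference $\mu+|w|\geq 1$. Thus, as $t$ ranges over the positive integers, the lengths of the corresponding members of $\fp(\vec{x})$ form an infinite arithmetic progression, and any such progression eventually meets every dyadic block $[2^k,2^{k+1})$; hence $\lfloor\log_2\ell(t)\rfloor$ takes both parities infinitely often, contradicting $\fp(\vec{x})\subseteq A_i$. I expect the cancellation-confinement step (the $\Delta$-system together with the $u,v,m$ decomposition) to be the crux; once the normal form above is in hand, the arithmetic-progression contradiction is essentially the same counting as in Theorem~\ref{nouncountablehindman}.
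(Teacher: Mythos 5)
Your proposal is correct and follows essentially the same route as the paper: the same $\lfloor\log_2\ell(\cdot)\rfloor$-parity colouring, a $\Delta$-system on supports to confine cancellation to the root, the same prefix/core/suffix decomposition with a pigeonhole step fixing the root prefix and suffix, and the resulting affine length formula for products. The only cosmetic difference is at the end, where you note that the lengths form an arithmetic progression that must meet every sufficiently large dyadic block, while the paper explicitly multiplies $2^{k-m}+1$ factors to land in the next block; both give the same contradiction.
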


\begin{proof}
 Following the general theme of this paper, we will consider the partition $F_\kappa=A_0\cup A_1$, 
 where (letting $\ell(x)$ denote the length of a reduced word $x\in F_\kappa$)
\begin{equation*}
 A_i=\{x\in F_\kappa\big|\lfloor\log_2\ell(x)\rfloor\equiv i\mod 2\},
\end{equation*}
and we will argue that no sequence $\vec{x}=\langle x_\alpha\big|\alpha<\omega_1\rangle$ of elements 
of a given $A_i$ can be such that 
$\fp(\vec{x})\subseteq A_i$, so we assume by way of contradiction that we have such a sequence 
$\vec{x}$ with $\fp(\vec{x})\subseteq A_i$. Let $L$ (with $|L|=\kappa$) be the alphabet that 
generates $F_\kappa$, and for a reduced word $x\in L_\kappa$ we define its support by
\begin{equation*}
 \supp(x)=\{a\in L\big|\text{either }a\text{ or }a^{-1}\text{ occur in }x\}
\end{equation*}
Since there are only countably many reduced words having the same fixed support, it is possible to 
thin out the sequence $\vec{x}$ so that the supports of its members are pairwise distinct, and by 
the $\Delta$-system lemma we can also assume that said supports form a $\Delta$-system, whose root 
we will denote by $r$. Now, any member $x_\alpha$ of the sequence $\vec{x}$ can be written 
as $x_\alpha=z_\alpha y_\alpha w_\alpha$ with $\supp(z_\alpha),\supp(w_\alpha)\subseteq r$ and 
such that the first and last letters of $y_\alpha$ do not belong to $r$ (it is possible that 
$z_\alpha$ or $w_\alpha$ are empty, but $y_\alpha$ has to be nonempty). A couple of applications 
of the pigeonhole principle will thin out the sequence $\vec{x}$ in such a way that all of the 
$z_\alpha$ equal some fixed $z$ and all of the $w_\alpha$ equal some fixed $w$. Let $v$ be the 
reduced word that results from multiplying $w\cdot z$ (this is, after performing all of the 
needed cancellations). Thus for $\alpha<\beta<\omega_1$ we have that 
$x_\alpha\cdot x_\beta=zy_\alpha v y_\beta w$, where the expression on the right has no 
cancellations, and so $\ell(x_\alpha\cdot x_\beta)=\ell(x_\alpha)+\ell(x_\beta)-n$ where 
$n=\ell(w)+\ell(z)-\ell(v)$, and similarly
\begin{equation*}
 l\left(x_{\alpha_0}\cdots x_{\alpha_t}\right)=\left(\sum_{j=0}^t \ell(x_{\alpha_j})\right)-tn.
\end{equation*}
Now, the pigeonhole principle implies that there is an $\omega_1$ sequence 
$\alpha_0<\alpha_1<\cdots<\alpha_\xi<\cdots$, for $\xi<\omega_1$, such that all of the lengths 
$\ell(x_{\alpha_j})$ equal some fixed number $l$. However, if we let $k=\lfloor\log_2(l)\rfloor$ and 
$m=\lfloor\log_2(l-n)\rfloor$ (notice that $l>\ell(w)+\ell(z)\geq n$ 
so it makes sense to take the latter logarithm), then we would have that (letting 
$x=\prod_{j=0}^{2^{k-m}}x_j=x_0\cdots x_{2^{k-m}}$)
\begin{equation*}
 \ell(x)=\left(\sum_{j=0}^{2^{k-m}}\ell(x_{\alpha_j})\right)-2^{k-m}n=(2^{k-m}+1)l-2^{k-m}n=2^{k-m}(l-n)+l,
\end{equation*}
thus
\begin{equation*}
 2^{k+1}=2^{k-m}2^m+2^k\leq \ell(x)<2^{k-m}2^{m+1}+2^{k+1}=2^{k+2};
\end{equation*}
so that $k+1=\lfloor\log_2\ell(x)\rfloor$ and hence $x\in A_{1-i}$, contrary to the assumption that 
$\fp(\vec{x})\subseteq A_i$. Finding this contradiction finishes the proof.
\end{proof}

It is, however, possible to find noncommutative semigroups that behave differently to the ones 
considered so far.

\begin{example}
 Let $S$ be a linearly ordered set and turn it into a semigroup by making $x*y=\max\{x,y\}$ (everything we 
 say for this example also holds if we consider $x*y=\min\{x,y\}$). This is a commutative semigroup with the 
 property that for every $X\subseteq S$, $\fs(X)=X$. Thus the statement $\hind{S,|S|}$ holds (as an easy 
 instance of the pigeonhole principle), regardless of whether $|S|$ is countable or uncountable.
\end{example}

\begin{example}\label{exordinals}
 For an ordinal $\alpha$ we let $S_\alpha$ be the semigroup of ordinals smaller than $\alpha$ with ordinal 
 addition as the semigroup operation (this semigroup is not commutative). The key observation that for 
 every infinite ordinal $\alpha$ there 
 exists a $\beta\geq\alpha$ such that $|\beta|=|\alpha|$ and $\gamma+\delta=\delta$ whenever $\delta\geq\beta$ 
 and $\gamma\leq\alpha$ (which follows by just taking $\beta=\alpha\cdot\omega$ where $\cdot$ is ordinal 
 multiplication) allows us to conclude that for every infinite ordinal $\alpha$, the statement 
 $\hind{S_\alpha,\cf(|\alpha|)}$ holds. For if we have a partition $S_\alpha=A_0\cup A_1$ into two cells, 
 by the pigeonhole principle we must have that $|A_i|=|\alpha|$ for some $i\in 2$, and consequently we 
 can recursively build a sequence $\vec{\gamma}=\langle\gamma_\xi\big|\xi<\cf(|\alpha|)\rangle$ by picking 
 a $\gamma_\xi>\left(\sup\{\gamma_\eta\big|\eta<\xi\}\right)\cdot\omega$ with $\gamma_\xi\in A_i$ and 
 $\gamma_\xi<|\alpha|$. Thus the observation at the beginning of this example implies 
 that $\gamma_{\xi_1}+\cdots+\gamma_{\xi_n}=\gamma_{\xi_n}$ whenever $\xi_1<\cdots<\xi_n<\cf(|\alpha|)$ and 
 so $\fp(\vec{\gamma})=\{\gamma_\xi\big|\xi<\cf(|\alpha|)\}\subseteq A_i$. (If one is more careful, it is 
 possible to construct this sequence with length $|\alpha|$, but that is not so relevant since we only wanted 
 to show that $\hind{S_\alpha,\kappa}$ holds for some $\alpha$ and uncountable $\kappa$.)
\end{example}

\section{Conclusions}

The main result that we have proved in this paper, is that the uncountable analog of Hindman's theorem 
in the realm of commutative cancellative semigroups fails, in the sense that any such semigroup 
$S$ can be partitioned in two cells, in such a way that for no uncountable $X\subseteq S$ is it possible for the 
set $\fs(X)$ to be contained within one single cell of the partition. An analogous result holds for 
the symmetric group as well (with the standard required changes in the definition of $\fs(X)$ to 
account for the non-commutativity of this group). As a consequence of this, when considering 
uncountable analogs of the Ramsey-theoretic 
results that have Hindman's theorem as a particular case (such as Gowers's theorem, or the infinitary 
version of the Hales-Jewett theorem), we have that these analogs fail as well.

When we drop cancellativity, we are able to obtain two examples of (non-commutative) semigroups 
for which the uncountable analog of Hindman's theorem holds. Something that both of these 
examples have in common is that the semigroups $S$ under consideration 
contain elements $x\in S$ that can ``absorb'' many $y\in S$ in the sense that $y*x=x$. Thus, it is conceivable 
to conjecture that $\hind{S,\lambda}$ fails for uncountable $\lambda$ provided that we are dealing with an 
$S$ that does not involve the aforementioned phenomenon, which naturally leads to the following question.

\begin{question}
 Does there exist a weakly right cancellative (or a cancellative) semigroup $S$ and an uncountable ordinal 
 $\alpha$ such that $\hind{S,\alpha}$ holds? We can also restrict our attention to groups: Does there 
 exist a (non-abelian) group $G$ and an uncountable $\alpha$ such that $\hind{G,\alpha}$ holds?
\end{question}

A partial (negative) answer to the previous question (in the context of groups) was provided by 
Milliken, who showed \cite[Th. 9]{milliken} that $\hind{G,|G|}$ fails whenever $G$ is a group satisfying that 
$|G|=\kappa^+=2^\kappa$ for some infinite cardinal $\kappa$ (in particular, assuming $\ch$ we get 
that $\hind{\mathbb R,\aleph_1}$ fails, which is \cite[Cor. 11]{milliken}). Theorem~\ref{freegroup}, and 
also the remark in the paragraph prior to that theorem, constitute partial negative answers to this question 
as well.

\section*{Acknowledgements}
The author is also grateful to Michael Hru\v{s}\'ak and 
Osvaldo Guzm\'an Gonz\'alez for fruitful discussions, and to Boaz Tsaban for making several 
useful comments about this paper.

\end{document}